\newcommand{\R}{\mathbb{R}}
\newcommand{\N}{\mathbb{N}}
\newcommand{\id}{\operatorname{id}}
\newcommand{\Fix}{\operatorname{Fix}}
\newcommand{\ext}{\operatorname{ext}}
\newcommand{\spn}{\operatorname{span}}
\newcommand{\supp}{\operatorname{supp}}
\newcommand{\bs}{\setminus}
\theoremstyle{plain}
\newtheorem{theorem}{Theorem}[section]
\newtheorem{lemma}[theorem]{Lemma}
\newtheorem{proposition}[theorem]{Proposition}
\theoremstyle{remark}
\newtheorem{remark}[theorem]{Remark}
\theoremstyle{definition}
\newtheorem{question}[theorem]{Question}
\numberwithin{equation}{section}
\begin{document}
\title[Real analytic nonexpansive maps on polyhedral normed spaces]{Real analytic nonexpansive maps on polyhedral normed spaces}
\author[B. Lins]{Brian Lins}
\date{}
\address{Brian Lins, Hampden-Sydney College}
\email{blins@hsc.edu}
\subjclass[2010]{Primary 47H09, 47H10; Secondary 39A23, 46T20}
\keywords{Nonexpansive maps, real analytic maps, fixed points, periodic orbits, polyhedral norms}

\begin{abstract}
If a real analytic nonexpansive map on a polyhedral normed space has a nonempty fixed point set, then we show that there is an isometry from an affine subspace onto the fixed point set. As a corollary, we prove that for any real analytic 1-norm or $\infty$-norm nonexpansive map on $\mathbb{R}^n$, there is a positive integer $q$ such that the period of any periodic orbit divides $q$ and $q$ is the order, or twice the order, of a permutation on $n$ letters. This confirms Nussbaum's $2^n$ Conjecture for $\infty$-norm nonexpansive maps in the special case where the maps are also real analytic. 
\end{abstract}

\maketitle

\section{Introduction}

Bruck \cite{Bruck73} showed that the fixed point set of a nonexpansive map on a Banach space with the fixed point property is a nonexpansive retract.  This implies that the fixed point set must be connected.  In strictly convex spaces, the fixed point set must also be convex, but that is not true in other Banach spaces, and in general the fixed point sets of nonexpansive maps can be quite complicated.  We show in this paper that the fixed point set of a real analytic nonexpansive map on a finite dimensional Banach space with a polyhedral norm must be a Lipschitz manifold that is isometric to a linear subspace.   

Nonexpansive maps on polyhedral normed spaces arise naturally in nonlinear Perron--Frobenius theory.  In some important cases, these maps are also real analytic. Nussbaum \cite[Section 2]{Nussbaum89} introduced and studied a large class $\mathcal{M}$ of real analytic nonexpansive maps with applications in population biology, see also \cite[Section 6.6]{LemmensNussbaum}.  Lim \cite{Lim05} and Qi \cite{Qi05} independently introduced the H-eigenproblem for nonnegative tensors which seeks entrywise positive eigenvectors for nonlinear maps corresponding to tensors. The eigenvectors of these maps correspond to fixed points of maps on $\R^n$ which are real analytic and $\infty$-norm nonexpansive, see e.g., \cite[Section 7.2]{Lins22}.  Real analytic nonexpansive maps also appear in the theory of neural networks when composing norm one linear transformations with real analytic activation functions, such as the sigmoid function \cite{PiCaGa24}. 


Let $X$ be a real Banach space with norm $\|\cdot\|$ and dual space $X^*$. Let $B_X$ denote the closed unit ball in $X$.  A finite dimensional Banach space has a \emph{polyhedral norm} if the unit ball has a finite number of extreme points, or equivalently if the dual unit ball $B_{X^*}$ has a finite number of extreme points. We use $\ext C$ to denote the extreme points of a convex set $C$.

For any map $f:X \rightarrow X$, let $\Fix(f)$ denote the set of fixed points of $f$. 
A map $f: X \rightarrow X$ is \emph{nonexpansive} if $\|f(x)-f(y)\| \le \|x-y\|$ for every $x, y \in X$.  
A function $f:X \rightarrow X$ is \emph{real analytic} if for every $x \in X$, there is a convergent power series for $f$ in a neighborhood of $x$, that is, there is an $r > 0$ and continuous symmetric $n$-linear forms $A_n: X^n \rightarrow X$ such that $\sum_{n = 1}^\infty \|A_n\| r^n < \infty$ and 
$$f(x + h) = f(x) + \sum_{n = 1}^\infty A_n(h^n)$$ 
for all $h$ in a neighborhood of $0$ in $X$. When $X = \R^n$, this definition is equivalent to requiring that each component of $f$ be a real analytic function.

In this paper, our primary interest is real analytic nonexpansive maps on polyhedral normed spaces.  However, for context we begin with a simple result about the fixed point sets of real analytic nonexpansive maps on strictly convex Banach spaces. A Banach space is \emph{strictly convex} if the boundary of the unit ball does not contain any line segments. 

\begin{proposition}
Let $X$ be a strictly convex Banach space.  Suppose $f: X \rightarrow X$ is nonexpansive and real analytic.  If $\Fix(f)$ is nonempty, then it is an affine subspace of $X$.
\end{proposition}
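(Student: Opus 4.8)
The plan is to prove two things about $C := \Fix(f)$: that it is closed and convex, and that it contains, together with any two of its points, the whole affine line through them. Since any nonempty set with the latter property is an affine subspace, this gives the claim.

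First I would check convexity. Since $f$ is continuous, $C$ is closed, so it suffices to show $C$ is midpoint-convex. Given $x_0, x_1 \in C$ with midpoint $m = \tfrac12(x_0+x_1)$, nonexpansiveness yields $\|f(m)-x_i\| = \|f(m)-f(x_i)\| \le \|m-x_i\| = \tfrac12\|x_0-x_1\|$ for $i=0,1$, while $\|x_0-x_1\| \le \|x_0-f(m)\|+\|f(m)-x_1\|$; hence equality holds throughout. Then $x_0 - x_1 = (x_0 - f(m)) + (f(m)-x_1)$ realizes equality in the triangle inequality, so strict convexity of $X$ forces the two summands to be nonnegative multiples of one another, and since they have equal norm, $f(m)=m$ (the degenerate case $x_0=x_1$ being trivial). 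Hence $C$ is midpoint-convex, therefore convex, and in particular every segment $[x_0,x_1]$ with endpoints in $C$ lies in $C$.

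Next I would propagate along lines using real analyticity. Fix $x_0, x_1 \in C$ and set $\gamma(t) = x_0 + t(x_1-x_0)$; this map $\R \to X$ is affine, hence real analytic, so $G := f\circ\gamma - \gamma \colon \R \to X$ is real analytic, and by the previous paragraph $G$ vanishes on $[0,1]$. For any $\varphi \in X^*$, the real-valued real analytic function $\varphi\circ G$ vanishes on a set with an accumulation point in the connected domain $\R$, hence vanishes identically by the classical one-variable identity theorem; since $\varphi$ is arbitrary, Hahn--Banach gives $G\equiv 0$. Thus $\gamma(\R)\subseteq C$, i.e.\ the full line through any two points of $C$ lies in $C$.

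Finally, fixing $x_0 \in C$, the set $V = C - x_0$ contains $0$; it is closed under scalar multiplication (apply the line property to $x_0$ and $x_0+v$) and under addition (the midpoint of $x_0+v$ and $x_0+w$ lies in $C$, so the line property applied to $x_0$ and that midpoint at parameter $2$ gives $v+w\in V$); hence $V$ is a linear subspace and $C = x_0 + V$ is affine. The only step that needs any care is the identity-theorem argument, which I would handle exactly as indicated: reduce the vector-valued statement to the scalar real analytic identity theorem by composing with continuous linear functionals, using that $[0,1]$ has an accumulation point in $\R$. Everything else is routine.
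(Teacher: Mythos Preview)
Your proof is correct and follows essentially the same outline as the paper's: use strict convexity to show that the segment between any two fixed points consists of fixed points, then invoke real analyticity of the one-parameter map $t\mapsto f(\gamma(t))-\gamma(t)$ to extend from the segment to the entire line, and conclude that $\Fix(f)$ equals its affine hull. The only cosmetic difference is in the convexity step: the paper argues that the balls $x+t\delta B_X$ and $y+(1-t)\delta B_X$ are $f$-invariant and meet in a single point, while you go through midpoint convexity plus closedness; both are standard routes to the same conclusion.
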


\begin{proof}
If $\Fix(f)$ is empty or contains only a single point, there is nothing to prove.  Suppose that $\Fix(f)$ contains two distinct points, $x$ and $y$. Let $\delta = \|x-y\|$. For any $0 < t < 1$, the closed balls $x + t \delta B_X$ and $y + (1-t) \delta B_X$ intersect at a single point $(1-t)x + t y$. Since both balls are invariant under $f$ by nonexpansiveness, the intersection points are fixed points for all $0 < t < 1$.  Since $f$ is real analytic, so is the map 
$$t \mapsto f((1-t)x + t y) - (1-t) x - t y.$$
As this map is identically zero for all $0 < t < 1$, it follows that it is identically zero for all $t \in \R$.  Therefore every point on the line $\{ (1-t) x + t y : t \in \R \}$ is a fixed point of $f$.  From this we conclude that the affine hull of $\Fix(f)$ is contained in $\Fix(f)$, and therefore $\Fix(f)$ is an affine subspace.  
\end{proof}


The fixed point set of real analytic nonexpansive maps in a polyhedral normed space need not be an affine subspace. However, in section \ref{sec:main} we state and prove the main result of this paper which guarantees that the fixed point set of a real analytic polyhedral norm nonexpansive map must be isometric to an affine subspace. 

In section \ref{sec:periodic} we consider periodic orbits of real analytic nonexpansive maps. 
The \emph{orbit} of a point $x \in X$ under the action of a map $f: X \rightarrow X$ is the set $\{f^k(x) : k \ge 0\}$.  A point $x \in X$ is a \emph{periodic point} of $f$ if its orbit is finite.  In that case, the \emph{minimal period} of $x$, also referred to as the \emph{period} of the orbit of $x$, is the number of elements in its orbit. The following proposition summarizes several known facts about the periodic points of polyhedral norm nonexpansive maps \cite[Theorem 4.2.1, Corollary 4.2.5, and Lemma 4.2.6]{LemmensNussbaum}.  

\begin{proposition} \label{prop:polyhedral}
For any polyhedral normed space $X$, there is an upper bound $M$ on the minimal periods of periodic points under the action of any nonexpansive map on $X$.  Moreover, if $f: X \rightarrow X$ is nonexpansive and $\Fix(f)$ is not empty, then for every $x \in X$, there is a periodic point $\xi$ of $f$ with minimal period $q$ such that $\lim_{k \rightarrow \infty} (f^{kq}(x)) = \xi$.
\end{proposition}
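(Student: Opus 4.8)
The plan is to prove the two assertions separately: the uniform bound $M$ by reducing to the classical case of the $\ell^\infty$ norm, and the attraction statement by analyzing $\omega$-limit sets together with the piecewise-linear structure of the norm.

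\textbf{The uniform bound $M$.} Since the norm on $X$ is polyhedral, $\ext B_{X^*}=\{\psi_1,\dots,\psi_m\}$ is finite and spans $X^*$ (otherwise $B_{X^*}=\cl{\conv}(\ext B_{X^*})$ would lie in a proper subspace), so the Minkowski map $\Phi(z)=(\psi_1(z),\dots,\psi_m(z))$ is a linear isometry of $X$ onto a subspace of $(\R^m,\|\cdot\|_\infty)$; here $\|z\|=\max_i\psi_i(z)=\max_i|\psi_i(z)|$ because $\ext B_{X^*}$ is symmetric. Because $\ell^\infty$-spaces are $1$-injective (extend the nonexpansive map coordinatewise by the McShane formula), the nonexpansive map $\Phi\circ f\circ\Phi^{-1}$ on $\Phi(X)$ extends to a nonexpansive self-map $g$ of $(\R^m,\|\cdot\|_\infty)$, and a periodic orbit of $f$ in $X$ becomes, via $\Phi$, a periodic orbit of $g$ of the same minimal period. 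It therefore suffices to bound the minimal periods of $\ell^\infty$-nonexpansive self-maps of $\R^m$, which is classical: along a periodic orbit $x_0,\dots,x_{p-1}$ the inequalities $\|x_{i+1}-x_{j+1}\|\le\|x_i-x_j\|$ are forced to be equalities by going once around the cycle, so pairwise distances depend only on the cyclic lag; recording which of the $2m$ functionals $\pm e_k^{*}$ realizes each distance produces a permutation of those functionals, and a careful accounting gives that $p$ divides the order, or twice the order, of a permutation on $m$ letters (see \cite[Chapter 4]{LemmensNussbaum}). Hence $M:=2\max_{\sigma\in S_m}\on{ord}(\sigma)$ works, and depends only on $m=|\ext B_{X^*}|$.

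\textbf{Setup for the attraction statement.} Fix $y\in\Fix(f)$ and $x\in X$. Then $\|f^k(x)-y\|=\|f^k(x)-f^k(y)\|\le\|x-y\|$, so the orbit of $x$ is bounded, hence relatively compact since $\dim X<\infty$; thus the $\omega$-limit set $\omega(x)=\bigcap_{N\ge 0}\cl{\{f^k(x):k\ge N\}}$ is nonempty, compact, and satisfies $f(\omega(x))=\omega(x)$. Moreover $k\mapsto\|f^k(x)-y\|$ is non-increasing, so it converges to some $\rho\ge 0$ and $\omega(x)\subseteq y+\rho\,\partial B_X$; since the norm is polyhedral, $\partial B_X$ is a finite union of flat affine polytopes (its faces).

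\textbf{The key step: $\omega(x)$ is a single periodic orbit.} A standard limiting argument using the monotonicity of distances along the orbit shows that $f$ restricts to a surjective isometry of the compact set $\omega(x)$. The point I expect to be the real obstacle is that this isometry has \emph{finite} order: for a general nonexpansive map this fails (a rotation of the Euclidean plane shows that $\omega$-limit sets need not contain any periodic point), so the polyhedral hypothesis must be used. Concretely, because $\omega(x)$ lies on the polyhedral sphere $y+\rho\,\partial B_X$, the behaviour of $f$ along the flat faces of that sphere is affine and, by the same distance-preservation, is governed by permutation matrices; this prevents an isometry of $\omega(x)$ of infinite order and forces the orbit into eventually periodic behaviour — this is the content of \cite[Theorem~4.2.1 and Corollary~4.2.5]{LemmensNussbaum}. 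Granting it, let $q_0$ be the order of $f|_{\omega(x)}$, so that $\omega(x)\subseteq\Fix(f^{q_0})$.

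\textbf{Convergence.} Put $g=f^{q_0}$. Then $\omega_g(x):=\bigcap_N\cl{\{g^k(x):k\ge N\}}$ is a nonempty subset of $\omega(x)\subseteq\Fix(g)$; choose $\xi\in\omega_g(x)$. Since $g$ is nonexpansive and $g(\xi)=\xi$, the sequence $k\mapsto\|g^k(x)-\xi\|$ is non-increasing, and it has a subsequence tending to $0$ because $\xi\in\omega_g(x)$; hence $f^{kq_0}(x)=g^k(x)\to\xi$. Now let $q$ be the minimal period of $\xi$, so $q\mid q_0$ and $f^q(\xi)=\xi$; then $k\mapsto\|f^{kq}(x)-\xi\|$ is non-increasing, and along the indices $k\in(q_0/q)\N$ (for which $kq$ is a multiple of $q_0$) it tends to $0$, so $f^{kq}(x)\to\xi$. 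This is exactly the asserted conclusion.
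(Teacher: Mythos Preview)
The paper does not prove this proposition; it merely records it as a summary of \cite[Theorem~4.2.1, Corollary~4.2.5, Lemma~4.2.6]{LemmensNussbaum}. Your proposal therefore supplies far more than the paper does, and most of the outline is sound: the isometric embedding of a polyhedral normed space into $(\R^m,\|\cdot\|_\infty)$ via the extreme functionals, the McShane/hyperconvexity extension to a nonexpansive self-map of $\R^m$, the boundedness of orbits once a fixed point exists, and the final convergence argument from a periodic $\omega$-limit point are all correct and standard. In the attraction part you are also honest that the finiteness of $\omega(x)$ is the hard step and defer to the same reference the paper cites.

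There is, however, one genuine error. You assert that for an $\ell^\infty$-nonexpansive self-map of $\R^m$ ``a careful accounting gives that $p$ divides the order, or twice the order, of a permutation on $m$ letters,'' and then set $M=2\max_{\sigma\in S_m}\operatorname{ord}(\sigma)=2g(m)$. This is false: as the paper itself notes just after stating the proposition, there exist $\infty$-norm nonexpansive maps on $\R^n$ with periodic orbits of size exactly $2^n$, while $2g(3)=6<8=2^3$, so your bound is already violated in dimension~$3$. The ``order or twice the order of a permutation'' statement is the Lemmens--van Gaans theorem \cite{LvG03} for \emph{linear} nonexpansive maps; the entire purpose of Theorem~\ref{thm:periodic} in this paper is to reduce the real-analytic case to that linear result. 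For arbitrary $\ell^\infty$-nonexpansive maps the best known bound is $\max_{k}2^k\binom{m}{k}$ \cite{LeSc05}. Any finite bound suffices for the proposition, so your reduction to $\ell^\infty$ is salvageable once you replace the claimed $M$ by one that is actually known to hold.
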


Nussbaum has conjectured \cite{Nussbaum90} that any periodic point of an $\infty$-norm nonexpansive map on $\R^n$ has minimal period at most $2^n$ \cite[Conjecture 4.2.2]{LemmensNussbaum}. The conjecture has been confirmed only up to $n = 3$ \cite{LyNu92}, and the proof for $n = 3$ is nontrivial. The current best known upper bound on the minimal periods of $\infty$-norm nonexpansive maps is $\max_{0 \le k \le n} 2^n \binom{n}{k}$ \cite{LeSc05}.  The conjectured upper bound of $2^n$ would be tight, since there are $\infty$-norm nonexpansive maps on $\R^n$ with periodic orbits containing $2^n$ elements.  In section \ref{sec:periodic}, we show that the periodic orbits of real analytic 1-norm and $\infty$-norm nonexpansive maps on $\R^n$ correspond to the periodic orbits of norm one linear maps. Then, by a result of Lemmens and van Gaans, the maximum period of any orbit is at most twice the order of a permutation on $n$ letters. This implies Nussbaum's $2^n$ Conjecture for real analytic $\infty$-norm nonexpansive maps on $\R^n$.

\section{Main Result} \label{sec:main}

A \emph{retract}, also known as a \emph{projection} is a map $R: X \rightarrow X$ such that $R^2(x) = R(x)$ for all $x \in X$.  We will reserve the term projection for retracts that are also linear.  If $X$ is a finite dimensional normed space, $f: X \rightarrow X$ is nonexpansive, and $\Fix(f)$ is nonempty, then the Krasnoselskii iterates, defined recursively by
$$x^{k+1} = \tfrac{1}{2}f(x^k) + \tfrac{1}{2}x^k,$$
converge to a fixed point for every $x^0 \in X$ \cite{Ishikawa76}.  Define $R: X \rightarrow \Fix(f)$ by 
\begin{equation} \label{eq:retract}
R(x) = \lim_{k \rightarrow \infty} (\tfrac{1}{2}f + \tfrac{1}{2}\id)^k(x)
\end{equation}
where $\id$ denotes the identity map on $X$. Since $R$ is the pointwise limit of the nonexpansive maps $(\tfrac{1}{2}f + \tfrac{1}{2} \id)^k$, it follows that $R$ is a nonexpansive retract onto $\Fix(f)$.  


\begin{theorem} \label{thm:main}
If $X$ is a polyhedral normed space, $f: X \rightarrow X$ is nonexpansive and real analytic, and $\Fix(f)$ is nonempty, then there is an affine subspace $W \subseteq X$ such that the nonexpansive retract $R$ defined by \eqref{eq:retract} is an isometry from $W$ onto $\Fix(f)$.  
\end{theorem}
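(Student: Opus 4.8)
The plan is to understand the local structure of $\Fix(f)$ near a fixed point and then bootstrap this to a global isometry using real analyticity. First I would fix a point $p \in \Fix(f)$ and, after translating, assume $p = 0$. The derivative $Df(0) =: L$ is a linear map with $\|L\| \le 1$ (since $f$ is nonexpansive), and $\Fix(L)$ is a linear subspace. The natural candidate for $W$ is the affine subspace $p + \Fix(Df(p))$. The key local claim is that near $p$, $\Fix(f)$ coincides with a graph over $\Fix(L)$: writing $X = \Fix(L) \oplus V$ for some complementary subspace $V$, one shows that $\Fix(f)$ near $0$ is $\{w + \phi(w) : w \in \Fix(L), \|w\| \text{ small}\}$ for a real analytic map $\phi: \Fix(L) \to V$ with $\phi(0) = 0$ and $D\phi(0) = 0$. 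This should follow from an implicit function theorem argument applied to $(\id - f)$ restricted appropriately, once one checks that $\id - L$ is invertible from $V$ onto a complement of $\range(\id - L)$; since $\|L\|\le 1$, $L$ is "nonexpansive linear" and on a polyhedral space such maps have no nontrivial Jordan blocks on the unit circle, so $X = \Fix(L) \oplus \range(\id - L)$ and the argument closes.

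Next I would show that $R$ maps a neighborhood of $0$ in $W$ isometrically onto a neighborhood of $0$ in $\Fix(f)$. Since $R$ is a nonexpansive retract with $DR(0)$ a projection of norm $\le 1$ onto $T_0\Fix(f) = \Fix(L)$, and since $W$ is tangent to $\Fix(f)$ at $0$, the map $R|_W$ is a real analytic local diffeomorphism from $W$ onto $\Fix(f)$ near $0$ with derivative the identity at $0$. For the isometry, I would use Proposition 2.2 (or rather the mechanism in its proof, adapted): given two fixed points $x, y$, the intersection-of-balls argument shows the whole line through them consists of fixed points only in strictly convex spaces, so here instead I would argue that $\|R(a) - R(b)\| = \|a - b\|$ for $a, b \in W$ by combining the nonexpansiveness of $R$ (giving $\le$) with a lower bound: the restriction of $f$ to the affine span of $\Fix(f) \cup \{a, b\}$, iterated via the Krasnoselskii scheme, cannot contract distances between points whose images stay in $\Fix(f)$, because on a polyhedral space a nonexpansive map that fixes an affine subspace $\Fix(L)$ and is differentiable acts as an isometry in the directions tangent to that subspace. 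Concretely, $D(f^{2^n})(0)$ stabilizes to a projection onto $\Fix(L)$ that is norm-preserving on $\Fix(L)$, forcing $R|_W$ to be an infinitesimal isometry everywhere it is defined, hence (being real analytic with connected domain) a genuine isometry.

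Then I would globalize. Real analyticity of $f$, hence of $R|_W$, means that the local isometry property — say, the identity $\|DR(w)h\| = \|h\|$ for $h \in W$, or the vanishing of $\|R(a)-R(b)\|^2 - \|a - b\|^2$ on a polyhedral cone structure — propagates by analytic continuation along $W$, exactly as in the proof of Proposition 2.2 where a real analytic function vanishing on an open set vanishes identically. One subtlety is that $\|\cdot\|$ is only piecewise analytic on a polyhedral space, so I would phrase the analytic identities within the interior of a fixed chamber of the norm and patch across chambers using continuity of $R$ and density arguments. Finally, surjectivity of $R|_W$ onto all of $\Fix(f)$ follows because $\Fix(f)$ is connected (Bruck's retraction theorem, cited in the introduction) and $R(W)$ is both open and closed in $\Fix(f)$: open by the local diffeomorphism property, closed because $R|_W$ is an isometry onto its image and $W$ is complete.

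The main obstacle I anticipate is establishing the lower bound $\|R(a)-R(b)\| \ge \|a-b\|$ for $a,b$ in the affine subspace $W$ — equivalently, showing that the retraction does not strictly contract in the tangent directions. Nonexpansiveness only gives the upper bound for free; the reverse inequality must exploit both the polyhedral structure (to rule out unipotent behavior of $Df$ on the unit circle and to make the Krasnoselskii limit behave well) and real analyticity (to upgrade an infinitesimal statement at a single fixed point to a global one). Getting the interplay between the piecewise-analytic norm and the genuinely analytic map $f$ right in this step is where the real work lies.
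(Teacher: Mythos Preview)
The decisive gap is your treatment of $R$. You write ``Real analyticity of $f$, hence of $R|_W$,'' speak of $DR(0)$, and call $R|_W$ ``a real analytic local diffeomorphism,'' but $R$ is defined only as the \emph{pointwise} limit of the Krasnoselskii iterates $(\tfrac12 f+\tfrac12\id)^k$. A pointwise limit of real analytic nonexpansive maps is merely Lipschitz; there is no reason for $R$ to be real analytic, or even differentiable at any given point. Your globalization step---analytic continuation of an isometry identity for $R$ along $W$---rests entirely on this unwarranted regularity and so collapses. The paper never asks $R$ to be smooth. Instead it works in the dual: for suitable faces $E$ of $B_{X^*}$ (the ``locked sets'') one has $\phi(f(x))=\phi(x)$ on an open set, and \emph{real analyticity of $f$} (not of $R$) propagates this scalar identity to an affine subspace. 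These preserved functionals pass through every Krasnoselskii iterate and hence through $R$; a single Rademacher point of differentiability of $R$ on the resulting subspace $V(f)$ yields a linear map $A$ whose range is $W$, and since the preserved functionals separate points both in $W$ and in $\Fix(f)$, $A$ and $R|_W$ are forced to be mutually inverse nonexpansive bijections, hence isometries.

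There is a second, independent gap in your local step. With $X=\Fix(L)\oplus \range(\id-L)$, applying the implicit function theorem to the $\range(\id-L)$-component of $\id-f$ produces a real analytic graph $\{w+\phi(w)\}$ that \emph{contains} $\Fix(f)$ near $0$, but you have not shown the reverse inclusion: solving $\pi_{\range(\id-L)}(x-f(x))=0$ only yields $x-f(x)\in\Fix(L)$, not $x-f(x)=0$. Closing this would in particular prove that $\Fix(f)$ is a real analytic manifold, which the paper explicitly leaves open (Question~4.1). So both the local structure you assume and the analytic machinery you invoke for the retract are unjustified, and neither is used in the paper's argument.
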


In order to prove this result, we begin with the following observations. For each of the following lemmas, we assume that $X$ is a polyhedral normed space and $f:X \rightarrow X$ is real analytic and nonexpansive.  
Let $J : X \rightrightarrows B_{X^*}$ be the \emph{duality map}
$$J(x) = \{ \phi \in B_{X^*} : \|x\| = \phi(x) \}.$$
Note that $J$ maps points in $X$ to faces of $B_{X^*}$.

For any set $E \subseteq B_{X^*}$ and any map $f: X \rightarrow X$, let 
$$S_E(f) = \{x \in X : \phi(f(x)) = \phi(x) \text{ for all } \phi \in E \},$$
and 
$$L_E = \{x \in X : \phi(x) = \psi(x) \text{ for all } \phi, \psi \in E \}.$$

\begin{lemma} \label{lem:duality}
For any $x \in X$, there exists $\epsilon > 0$ such that $J(x+y) \subseteq J(x)$ for every $y \in X$ with $\|y \|\le \epsilon$.  
\end{lemma}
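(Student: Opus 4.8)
The plan is to use the finiteness of the extreme point set of $B_{X^*}$ to reduce the statement to a single inequality that is "open" in $x$. Since $X$ is a polyhedral normed space, $B_{X^*}$ has finitely many extreme points, say $\ext B_{X^*} = \{\phi_1, \dots, \phi_m\}$, and every face of $B_{X^*}$ is the convex hull of a subset of these. Because $J(x)$ is a face of $B_{X^*}$, it is determined by the set $I(x) = \{ i : \phi_i(x) = \|x\| \}$: concretely $J(x) = \conv\{\phi_i : i \in I(x)\}$ (using that $\|x\| = \max_i \phi_i(x)$ since the extreme points of $B_{X^*}$ norm $X$). Hence to prove $J(x+y) \subseteq J(x)$ it suffices to show $I(x+y) \subseteq I(x)$, i.e. that no new extreme functional becomes norming when we perturb $x$ slightly.

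First I would dispose of the trivial case $x = 0$, where $J(0) = B_{X^*}$ and the inclusion holds for any $\epsilon$. Assume $x \neq 0$. For each index $j \notin I(x)$ we have $\phi_j(x) < \|x\|$; since there are finitely many such $j$, there is a gap $\gamma > 0$ with $\phi_j(x) \le \|x\| - \gamma$ for all $j \notin I(x)$. Now estimate: for $\|y\| \le \epsilon$ and any $j \notin I(x)$,
$$\phi_j(x+y) \le \phi_j(x) + \|y\| \le \|x\| - \gamma + \epsilon,$$
while for any $i \in I(x)$,
$$\phi_i(x+y) \ge \phi_i(x) - \|y\| = \|x\| - \|y\| \ge \|x\| - \epsilon.$$
Also $\|x+y\| = \max_k \phi_k(x+y)$. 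Choosing $\epsilon = \gamma/3$ (say), we get for $j \notin I(x)$ that $\phi_j(x+y) \le \|x\| - 2\gamma/3$, whereas picking any $i \in I(x)$ shows $\|x+y\| \ge \phi_i(x+y) \ge \|x\| - \gamma/3 > \|x\| - 2\gamma/3 \ge \phi_j(x+y)$. Thus $j \notin I(x+y)$, which gives $I(x+y) \subseteq I(x)$ and hence $J(x+y) \subseteq J(x)$.

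The only genuine point requiring care — and the step I would flag as the main obstacle — is justifying that $\|z\| = \max_{i} \phi_i(z)$ over the finitely many extreme functionals $\phi_i$, and correspondingly that $J(z) = \conv\{\phi_i : \phi_i(z) = \|z\|\}$. This is where polyhedrality is actually used: in general $\|z\| = \sup_{\phi \in B_{X^*}} \phi(z)$ and the sup need not be attained at an extreme point, but for a polyhedral norm $B_{X^*}$ is a polytope, so the linear functional "evaluation at $z$" attains its maximum over $B_{X^*}$ at an extreme point, and the face on which it is attained is the convex hull of the extreme points lying in it. Once that structural fact is in hand, the perturbation estimate above is routine and finishes the proof. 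Note the lemma in fact proves the stronger statement with a uniform $\epsilon$ on any bounded set away from $0$ is false, but the pointwise statement as stated follows immediately; neither real analyticity nor nonexpansiveness of $f$ is needed here, only the polyhedral structure of $X$.
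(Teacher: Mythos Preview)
Your proof is correct and follows essentially the same approach as the paper's: both dispose of the case $x=0$ trivially, then use finiteness of $\ext B_{X^*}$ to obtain a strict gap between $\|x\|$ and the values $\phi(x)$ for extreme functionals $\phi \notin J(x)$ (the paper's constant $c<1$ is your $1-\gamma/\|x\|$), perturb by $\epsilon$ small relative to this gap, and conclude via the fact that every face of the polytope $B_{X^*}$ is the convex hull of its extreme points. Your write-up is in fact a bit more explicit than the paper's about the key structural identity $\|z\| = \max_i \phi_i(z)$ and the description $J(z) = \conv\{\phi_i : \phi_i(z)=\|z\|\}$, which the paper uses implicitly.
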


\begin{proof}
The claim is trivial if $x = 0$, since $J(x) = B_{X^*}$ in that case.  Suppose that $x \ne 0$. Since $X$ is a polyhedral normed space, the dual unit ball $B_{X^*}$ only has a finite number of extreme points.  Therefore there is a constant $c < 1$ such that $\phi(x) < c \|x\|$ for all $\phi \in \ext B_{X^*} \bs J(x)$.  Since $c < 1$, we can choose $\epsilon$ small enough so that $c \|x \|+\epsilon < \|x\| - \epsilon$. Then
$$\phi(x + y) \le c \|x \| + \epsilon < \|x \| - \epsilon \le \|x + y \|$$
for all $y \in B_X$ and $\phi \in \ext B_{X^*} \bs E$.  Therefore $J(x+y)$ is a face of $B_{X^*}$ which does not contain any extreme points of $B_{X^*}$ outside $J(x)$.  Since every face of $B_{X^*}$ is the convex hull of the elements of $\ext B_{X^*}$ which are contained in the face \cite[Theorem 7.3]{Bronsted}, we conclude that $J(x+y) \subseteq J(x)$. 
\end{proof}

\begin{lemma} \label{lem:faces} 
If $E = J(y)$ for some $y \in X$, then $L_E$ is the linear span of the face 
$$F_E = \{x \in B_X : \phi(x) = 1 \text{ for all } \phi \in E\}.$$
\end{lemma}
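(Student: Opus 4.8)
The plan is to prove the set equality $L_E = \spn F_E$ by two inclusions, with the non-trivial direction powered by Lemma~\ref{lem:duality}. First I would dispose of the degenerate case $y = 0$: then $E = J(0) = B_{X^*}$, so $F_E = \emptyset$ (the functional $\phi = 0$ would force $0 = 1$) and $L_E = \{0\}$ (taking $\psi = 0$ forces $\phi(x) = 0$ for all $\phi \in B_{X^*}$), and both sides equal $\{0\}$. So assume $y \neq 0$; since $J(ty) = J(y)$ for all $t > 0$, we may rescale and assume $\|y\| = 1$, so that $y \in F_E$ because every $\phi \in E = J(y)$ satisfies $\phi(y) = \|y\| = 1$.

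The easy inclusion is $\spn F_E \subseteq L_E$: the set $L_E$ is a linear subspace, being cut out by the linear equations $\phi(x) - \psi(x) = 0$, and any $x \in F_E$ satisfies $\phi(x) = 1 = \psi(x)$ for all $\phi, \psi \in E$, hence lies in $L_E$. For the reverse inclusion, take $x \in L_E$ and let $\alpha$ denote the common value $\phi(x)$, $\phi \in E$. The key observation is that $y + \epsilon x$ has norm exactly $1 + \epsilon\alpha$ for all small $\epsilon > 0$: every $\phi \in E$ gives $\phi(y + \epsilon x) = \phi(y) + \epsilon\phi(x) = 1 + \epsilon\alpha$, and by Lemma~\ref{lem:duality} applied at $y$, for $\epsilon$ small enough $J(y + \epsilon x) \subseteq J(y) = E$, so $\|y + \epsilon x\|$ is attained by some functional in $E$ and therefore equals $1 + \epsilon\alpha$. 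Choosing $\epsilon$ small enough that also $1 + \epsilon\alpha > 0$, it follows that $w := (y + \epsilon x)/(1 + \epsilon\alpha)$ has $\|w\| = 1$ and $\phi(w) = 1$ for every $\phi \in E$, i.e. $w \in F_E$. Since $y \in F_E$ as well,
$$x = \frac{(1 + \epsilon\alpha)\, w - y}{\epsilon} \in \spn F_E,$$
which gives $L_E \subseteq \spn F_E$ and completes the proof.

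The only real content is the middle step, where the polyhedral hypothesis enters through Lemma~\ref{lem:duality}: it is precisely the containment $J(y + \epsilon x) \subseteq J(y)$ that forces $\|y + \epsilon x\|$ to equal $1 + \epsilon\alpha$ exactly rather than merely be at least that much, and hence lets us normalize $y + \epsilon x$ back onto the face $F_E$. In a general Banach space this localization of the duality map can fail, so this is where the argument is genuinely polyhedral. As a byproduct one sees that $F_E$ is nonempty whenever $y \neq 0$, since then $y \in F_E$; if one wants it recorded that $F_E$ is a face of $B_X$, note that $F_E$ coincides with the exposed face $\{x \in B_X : \phi_0(x) = 1\}$ for any relative interior point $\phi_0$ of $E$.
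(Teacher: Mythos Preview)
Your proof is correct and follows essentially the same approach as the paper: both use Lemma~\ref{lem:duality} to show that small perturbations of $y$ by elements of $L_E$, once normalized, lie in $F_E$, and then express $x$ as a linear combination of such points. The only cosmetic difference is that the paper uses the two points $(y\pm\epsilon x)/\|y\pm\epsilon x\|$, whereas you normalize $y$ first and use $y$ together with $(y+\epsilon x)/(1+\epsilon\alpha)$.
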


\begin{proof}
It is clear from the definition that $\spn F_E \subseteq L_E$. Let $x \in L_E$. By choosing $\epsilon > 0$ small enough, we can guarantee that $J(y - \epsilon x)$ and $J(y + \epsilon x)$ are subsets of $E$ by Lemma \ref{lem:duality}.  Since both $y + \epsilon x$ and $y - \epsilon x$ are in $L_E$, we have $J(y - \epsilon x) = J(y+\epsilon x) = E$.  Therefore 
$$\frac{y - \epsilon x}{\|y - \epsilon x\|} \text{ and } \frac{y + \epsilon x}{\| y + \epsilon x\|}$$
are both contained in $F_E$, and $x$ is a linear combination of these two points.  Therefore $L_E = \spn F_E$.  
\end{proof}

\begin{lemma} \label{lem:claim1}
If there exist $v, w \in S_E(f)$ such that $J(v-w) = E$, then 
$$w + L_E \subseteq S_E(f).$$
\end{lemma}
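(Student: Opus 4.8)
The plan is to show that $S_E(f)$ contains a nonempty relatively open subset of the affine subspace $w + L_E$, and then to conclude by the identity theorem for real analytic functions. Indeed, by Lemma~\ref{lem:faces} the set $L_E = \spn F_E$ is a linear subspace, hence connected, so for each $\phi \in E$ the function $u \mapsto \phi(f(w+u)) - \phi(w+u)$ is real analytic on $L_E$, and if it vanishes on a nonempty relatively open subset of $w + L_E$ it vanishes on all of $w + L_E$; intersecting over $\phi \in E$ then yields $w + L_E \subseteq S_E(f)$. We may assume $v \ne w$, since otherwise $E = J(0) = B_{X^*}$, $L_E = \{0\}$, and the conclusion reduces to the hypothesis $w \in S_E(f)$. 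Write $\delta = \|v-w\| > 0$ and $F = F_E$, and note that $z_0 := (v-w)/\delta$ lies in $F$, that every $z \in F$ has $\|z\| = 1$, and that $\phi(z) = 1$ for every $z \in F$ and $\phi \in E$.

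The crucial observation is the following two-sided squeeze coming from nonexpansiveness: if $a, c \in S_E(f)$ and $b \in X$ satisfy $\phi(b-a) = \|b-a\|$ and $\phi(c-b) = \|c-b\|$ for every $\phi \in E$, then $b \in S_E(f)$. To see this, fix $\phi \in E$; using $a \in S_E(f)$,
\[ \phi(f(b)) - \phi(b) = \bigl(\phi(f(b)) - \phi(f(a))\bigr) + \bigl(\phi(f(a)) - \phi(a)\bigr) - \bigl(\phi(b) - \phi(a)\bigr) \le \|f(b)-f(a)\| - \|b-a\| \le 0, \]
and the symmetric computation with $c$ in place of $a$ gives $\phi(f(b)) - \phi(b) \ge 0$. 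I would apply this with $a = w$, $c = v$, and $b = w + sz$ for a fixed $z \in F$ and small $s > 0$. The first condition holds because $\phi(b-a) = \phi(sz) = s = \|sz\| = \|b-a\|$. For the second, $\phi(c-b) = \phi\bigl((v-w) - sz\bigr) = \delta - s$ for every $\phi \in E$; and by Lemma~\ref{lem:duality} applied at $v-w$, for all sufficiently small $s > 0$ we have $J\bigl((v-w) - sz\bigr) \subseteq J(v-w) = E$, so picking any $\psi$ in this set gives $\|(v-w) - sz\| = \psi\bigl((v-w)-sz\bigr) = \delta - s$. Hence $w + sz \in S_E(f)$ for all small $s > 0$.

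To finish, fix $z \in F$ and pick such an $s > 0$. Since $w, w+sz \in S_E(f)$ and $\phi\bigl((w+sz)-w\bigr) = \|sz\|$ for all $\phi \in E$, the same squeeze applied to the points $w + tz$, $t \in [0,s]$, shows $w + tz \in S_E(f)$ for all $t \in [0,s]$; as $t \mapsto \phi(f(w+tz)) - \phi(w+tz)$ is a real analytic function of $t$ vanishing on $[0,s]$, it vanishes for all $t \in \R$, so $w + \R z \subseteq S_E(f)$ for every $z \in F$. The union of these lines contains the convex cone $\R_{\ge 0} F$, whose span is $\spn F = L_E$ by Lemma~\ref{lem:faces}; a convex cone spanning a finite dimensional space has nonempty relative interior in that space, so $\R_{\ge 0} F$ has a nonempty relatively open subset $U$ of $L_E$. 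Then $w + U \subseteq S_E(f)$ is a nonempty relatively open subset of $w + L_E$, and the identity theorem argument from the first paragraph gives $w + L_E \subseteq S_E(f)$.

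The main obstacle is the step that produces points $w + sz \in S_E(f)$ in directions $z \in F$ transverse to the line through $v$ and $w$: real analyticity alone cannot thicken a line into an open set, so one really needs the two-sided estimate from nonexpansiveness, combined with the fact special to polyhedral norms (Lemma~\ref{lem:duality}) that the duality map can only shrink near $v-w$, which is precisely what makes $\|(v-w) - sz\|$ computable for small $s$. Once a nonempty relatively open subset of $w + L_E$ is known to lie in $S_E(f)$, the conclusion follows from real analyticity with no further work.
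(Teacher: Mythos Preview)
Your proof is correct and rests on the same two-sided squeeze as the paper's: use $v$ from above and $w$ from below, with Lemma~\ref{lem:duality} controlling $J$ near $v-w$, then extend by real analyticity. The execution differs. You work ray by ray: for each $z\in F_E$ you show $w+sz\in S_E(f)$ for small $s$, extend to the full line by one-variable analyticity, and then invoke the nonempty relative interior of the convex cone $\R_{\ge 0}F_E$ in $L_E$ to produce an open set for the multivariable identity theorem. The paper instead takes a small neighborhood $U$ of the midpoint $x=\tfrac{1}{2}(v+w)$ inside $w+L_E$; Lemma~\ref{lem:duality} applied at $x-w$ and $v-x$ gives $J(u-w),\,J(v-u)\subseteq E$ for $u\in U$, and since $u-w,\,v-u\in L_E$ (so every $\phi\in E$ takes the same value on them) these inclusions are automatically equalities. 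The squeeze then applies on all of $U$ at once, and a single application of the identity theorem finishes. The midpoint trick is shorter; your route through the face $F_E$ and its cone is a valid alternative. One redundancy worth noting: after your first squeeze you already have $w+sz\in S_E(f)$ for every $s$ in an interval $(0,\epsilon]$ (the $\epsilon$ from Lemma~\ref{lem:duality} is uniform over $z\in F_E$ since $\|z\|=1$), so the second squeeze with endpoints $w$ and $w+sz$ is unnecessary before invoking one-variable analyticity.
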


\begin{proof}
Let $x = \tfrac{1}{2}(v + w)$. Observe that $J(x-w) = J(v-x) = E$.  By Lemma \ref{lem:duality}, there is a sufficiently small relatively open neighborhood $U$ around $x$ in $w + L_E$ such that $J(u-w)$ and $J(v-u)$ are subsets of $E$ for every $u \in U$. Then, since $u \in w+L_E$, it follows that $J(u-w) = J(v-u) = E$.  For any $\phi \in E$ and $u \in U$,
\begin{align*}
\phi(v) - \phi(u) &= \|v-u\| & \\
&\ge \|f(v)-f(u)\| & (\text{nonexpansiveness}) \\
&\ge \phi(f(v)) - \phi(f(u)) & (\text{since } \phi \in B_{X^*}) \\
&= \phi(v) - \phi(f(u)), & (\text{since } v \in S_E(f))
\end{align*}
which implies that $\phi(f(u)) \ge \phi(u)$. Similarly, 
\begin{align*}
\phi(u) - \phi(w) &= \|u-w\| & \\ 
&\ge \|f(u)-f(w)\| & (\text{nonexpansiveness}) \\
&\ge \phi(f(u)) - \phi(f(w)) & (\text{since } \phi \in B_{X^*}) \\
&= \phi(f(u)) - \phi(w). & (\text{since } w \in S_E(f))
\end{align*}
Combining this with the previous inequality, we have $\phi(f(u)) = \phi(u)$ for all $u \in U$ and $\phi \in E$. 
Since $\phi(f(u))-\phi(u)$ is real analytic and identically zero on $U$, it follows that it is identically zero on the affine span of $U$ which is $w+L_E$.  Therefore $w+L_E \subseteq S_E(f)$.
\end{proof} 

We will say that a set $E \subseteq B_{X^*}$ is \emph{locked} if there exist $v, w \in S_E(f)$ such that $J(v-w) = E$.  A locked set $E$ is \emph{minimal} if no proper subset of $E$ is locked.

\begin{lemma} \label{lem:claim2}
If $E \subseteq B_{X^*}$ is a minimal locked set, then $S_E(f) = x + L_E$ for any $x \in S_E(f)$.  
\end{lemma}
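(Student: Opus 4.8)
The plan is to prove the two inclusions $w + L_E \subseteq S_E(f)$ and $S_E(f) \subseteq w + L_E$ for one fixed pair of witnesses $v, w \in S_E(f)$ with $J(v - w) = E$. Granting these, $S_E(f) = w + L_E$; and since every $x \in S_E(f)$ then satisfies $x - w \in L_E$, we get $x + L_E = w + L_E = S_E(f)$, which is the assertion.

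First I would record the easy half. From $J(v-w) = E$ we have $\phi(v-w) = \|v-w\|$ for all $\phi \in E$, so the value $\phi(v-w)$ is the same for every $\phi \in E$; hence $v - w \in L_E$, and in particular $v \in w + L_E$. The inclusion $w + L_E \subseteq S_E(f)$ is then exactly Lemma \ref{lem:claim1} applied to the locked set $E$ with the witnesses $v, w$.

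The substance is the reverse inclusion, and this is the one place where minimality enters. I would argue by contradiction: assuming some $x \in S_E(f)$ has $x - w \notin L_E$, I would construct a locked set properly contained in $E$. Apply Lemma \ref{lem:duality} to the vector $v - w$ and choose $\epsilon > 0$ small enough that $E' := J\big((v-w) + \epsilon(x-w)\big) \subseteq E$. Since $J$ is unchanged under multiplication of its argument by a positive scalar, writing $\ell_0 := \tfrac{1}{\epsilon}(v-w) \in L_E$ we may rewrite $E' = J\big((x-w) + \ell_0\big)$. Then $E' \ne E$: if $E' = E$, the map $\phi \mapsto \phi\big((v-w) + \epsilon(x-w)\big)$ would be constant on $E$, and subtracting the already constant map $\phi \mapsto \phi(v-w)$ would force $\phi \mapsto \phi(x-w)$ to be constant on $E$, i.e.\ $x - w \in L_E$, a contradiction. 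And $E'$ is locked: both $x$ and $w - \ell_0$ lie in $S_E(f) \subseteq S_{E'}(f)$ — the point $w - \ell_0$ because $-\ell_0 \in L_E$ and $w + L_E \subseteq S_E(f)$ by Lemma \ref{lem:claim1} — while $J\big(x - (w - \ell_0)\big) = J\big((x-w) + \ell_0\big) = E'$. Thus $E'$ is a proper locked subset of $E$, contradicting minimality, so no such $x$ exists and $S_E(f) \subseteq w + L_E$.

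The step I expect to be the crux is the construction of $E'$ — recognizing that $(v-w) + \epsilon(x-w)$ is the vector to feed into the duality map, so that Lemma \ref{lem:duality} trims it to a proper subface of $E$ while the slice $w + L_E$ produced by Lemma \ref{lem:claim1} supplies a second point keeping $E'$ locked. Everything else is bookkeeping with the definitions of $S_E(f)$, $L_E$, and $J$; the only degenerate case, $v = w$ (where $E = B_{X^*}$, $L_E = \{0\}$, and $\ell_0 = 0$), is covered verbatim by the same computation.
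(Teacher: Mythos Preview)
Your proof is correct and follows essentially the same route as the paper's: both perturb $v-w$ by a small multiple of $x-w$ (the paper writes this as $y + t^{-1}(w-x)$ with $t$ large), use Lemma~\ref{lem:duality} to land inside $E$, and then invoke Lemma~\ref{lem:claim1} to supply a second witness in $w+L_E$ so that the resulting face is locked and minimality forces it to equal $E$. The only cosmetic difference is that you phrase the minimality step as a contradiction (assume $x-w\notin L_E$, produce a proper locked $E'\subsetneq E$) whereas the paper argues directly that $G=E$ and reads off $x-w\in L_E$.
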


\begin{proof} Since $E$ is locked, there exist $v, w \in S_E(f)$ such that $J(v-w) = E$.  Choose any $x \in S_E(f)$. We will show that $x \in w+L_E$. 

Let $y = v-w$. For any sufficiently large $t > 0$, 
$$J(w + ty - x) = J(y + t^{-1}(w-x)) \subseteq E$$
by Lemma \ref{lem:duality}. Let $G = J(w+ty-x)$. Observe that $S_E(f) \subseteq S_G(f)$ since $G \subseteq E$. 
By Lemma \ref{lem:claim1}, $w + L_E \subseteq S_E(f)$. Therefore both $x$ and $w+ty$ are elements of $S_G(f)$. Since $E$ has no locked proper subsets, we conclude that $G = J(w+ty - x) = E$. This means that $x \in w+L_E$ which proves that $S_E(f) = w+L_E$. Of course, $x+L_E = w+L_E$ since $x - w \in L_E$. 
\end{proof}

Let $\mathcal{M}(f)$ be the union of all minimal locked subsets of $B_{X^*}$. The following lemma shows that the linear functionals in $\mathcal{M}(f)$ separate points in $\Fix(f)$.  

\begin{lemma} \label{lem:separate}
If $x, y\in \Fix(f)$ have $\phi(x) = \phi(y)$ for all $\phi \in \mathcal{M}(f)$, then $x = y$. 
\end{lemma}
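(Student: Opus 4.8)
The plan is to prove the contrapositive: given distinct fixed points $x \neq y$, produce a functional $\phi \in \mathcal{M}(f)$ with $\phi(x) \neq \phi(y)$. The natural place to look is the face $E_0 := J(x-y)$ of $B_{X^*}$, since \emph{every} $\phi \in E_0$ automatically separates $x$ and $y$: one has $\phi(x) - \phi(y) = \phi(x-y) = \|x-y\| > 0$. So the whole task reduces to locating a minimal locked set inside $E_0$.

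First I would check that $E_0$ is itself locked. Because $x$ and $y$ are fixed points, $\phi(f(x)) = \phi(x)$ and $\phi(f(y)) = \phi(y)$ for every $\phi$, so $x, y \in S_{E_0}(f)$; and by definition $J(x-y) = E_0$. Taking $v = x$ and $w = y$ in the definition of ``locked'' then shows $E_0$ is a locked set.

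Next I would pass from $E_0$ down to a minimal locked subset. Every locked set has the form $J(v-w)$ and is therefore a face of $B_{X^*}$; since $X$ is polyhedral, $B_{X^*}$ has finitely many extreme points, and by \cite[Theorem 7.3]{Bronsted} each face is the convex hull of the extreme points it contains, so there are only finitely many faces and hence only finitely many locked sets. Among the locked subsets of $E_0$ we may therefore choose one, $E$, that is minimal with respect to inclusion. If some locked $E' \subsetneq E$ existed, then $E'$ would be a locked subset of $E_0$ properly contained in $E$, contradicting minimality; hence $E$ has no locked proper subset, i.e.\ $E$ is a minimal locked set, and so $E \subseteq \mathcal{M}(f)$.

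Finally, since $E \subseteq E_0 = J(x-y)$, any $\phi \in E$ satisfies $\phi(x) - \phi(y) = \|x - y\| > 0$, so $\phi$ separates $x$ and $y$, completing the contrapositive. I do not expect a real obstacle here; the only point that needs a word of care is the existence of a minimal locked set, which rests on polyhedrality together with the already-noted fact that $J$ takes values in the faces of $B_{X^*}$.
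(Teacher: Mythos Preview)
Your argument is correct and is essentially the same as the paper's: both look at $E_0 = J(x-y)$, observe it is locked because $x,y\in\Fix(f)\subseteq S_{E_0}(f)$, and then pass to a minimal locked subset $E\subseteq E_0$, whose elements necessarily separate $x$ and $y$. Your version is slightly more explicit in justifying the existence of a minimal locked subset via the finiteness of faces of $B_{X^*}$, whereas the paper simply asserts that $G$ contains a minimal locked subset.
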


\begin{proof}
Consider the set $G = J(x-y)$.  If $x \ne y$, then $G$ cannot contain any $\phi \in \mathcal{M}(f)$.  Furthermore, $G$ is locked by definition since both $x, y$ are fixed points of $f$ and are therefore elements of $S_G(f)$. Then $G$ contains a minimal locked subset, but that contracts the assumption that $\mathcal{M}(f)$ is the union of every minimal locked subset. So $x = y$.
\end{proof}

\begin{proof}[Proof of Theorem \ref{thm:main}] 
If $0$ is not a fixed point of $f$, then we can replace $f(x)$ with $f(x+y)-y$ where $y \in \Fix(f)$ which allows us to assume without loss of generality that $0 \in \Fix(f)$. Note that $\Fix(f) \subseteq S_E(f)$ for every $E \subseteq B_{X^*}$. In particular, since $0 \in S_E(f)$ for every minimal locked set $E$ of $f$, Lemma \ref{lem:claim2} implies that $S_E(f) = L_E$. Let $V(f)$ be the intersection of the subspaces $L_E$ corresponding to each minimal locked subset $E$ of $f$.  Then $\Fix(f) \subseteq V(f)$. 

Since the nonexpansive retract $R$ defined by \eqref{eq:retract} is a 1-Lipschitz map from any subspace of $X$ onto $\Fix(f)$, Rademacher's theorem implies that there is a point $u \in V(f)$ where $R$ is differentiable as a map from $V(f)$ into $\Fix(f)$.  Let $A$ be the derivative of $R$ at $u$. 

For any $x \in V(f)$ and $\phi \in \mathcal{M}(f)$ we know that $\phi(f(x)) = \phi(x)$ by the definition of $V(f)$.  We also have $\phi(R(x)) = \phi(x)$ by \eqref{eq:retract}. In addition,
$$\phi( A x) = \lim_{h \rightarrow 0} \frac{\phi(R(u+hx) - R(u))}{h} = \lim_{h \rightarrow 0} \frac{\phi(u+hx) - \phi(u)}{h} = \phi(x).$$
Therefore 
\begin{equation} \label{eq:preserve}
\phi(R(x)) = \phi(Ax) = \phi(x)
\end{equation}
for all $\phi \in \mathcal{M}(f)$ and $x \in V(f)$. 

Let $W$ be the image of $V(f)$ under $A$. As the derivative of a nonexpansive map, $A$ is a nonexpansive linear transformation from $V(f)$ onto $W$. By Lemma \ref{lem:separate}, the functionals in $\mathcal{M}(f)$ separate points in $\Fix(f)$.  We will now prove that these functionals also separate points in $W$.  Suppose that $w \in W$ has $\phi(w) = 0$ for all $\phi \in \mathcal{M}(f)$. By definition, there must be a $v \in V(f)$ such that $w = Av$. By \eqref{eq:preserve} $\phi(v) = \phi(Av) = \phi(w) = 0$ and therefore 
$\phi(R(u+hv) - R(u)) = 0$ for every $\phi \in \mathcal{M}(f)$ and $h \in \R$. Then Lemma \ref{lem:separate} implies that the fixed points $R(u+hv)$ and $R(u)$ must be identical.  So 
$$w = Av = \lim_{h \rightarrow 0} \frac{R(u+hv) - R(u)}{h} = 0.$$
This means that if $x, y \in W$ and $\phi(x) = \phi(y)$ for all $\phi \in \mathcal{M}(f)$, then $x = y$, so $\mathcal{M}(f)$ separates points on $W$. 

Since $A$ and $R$ preserve the values of every $\phi \in \mathcal{M}(f)$ on $V(f)$ and $\mathcal{M}(f)$ separates points in both $W$ and $\Fix(f)$, it follows that $A$ and $R$ are inverse bijections between $W$ and $\Fix(f)$.  Since both maps are nonexpansive, they must also both be isometries. 
\end{proof}

\begin{remark} \label{rem:Asquared}
Let $A$ be the map from the proof above. By \eqref{eq:preserve}, $\phi(A^2 x) = \phi(Ax)$ for all $x \in V(f)$ and $\phi \in \mathcal{M}(f)$. Since $\mathcal{M}(f)$ separates points in $W$, it follows that $A^2 = A$. Therefore $A$ is a nonexpansive linear projection from $V(f)$ onto $W$.  
\end{remark}

\section{Periodic points} \label{sec:periodic}

Let $[n] = \{1, \ldots, n\}$.  Let $e_i$, $i \in [n]$, denote the standard basis vectors for $\R^n$.
Recall that the \emph{$p$-norm} on $\R^n$ is 
$$\|x\|_p = \left( \sum_{i = 1}^n |x_i|^p \right)^{1/p}$$
for $1 \le p < \infty$, and 
$$\|x\|_p = \max_{1 \le i \le n} |x_i|$$
when $p = \infty$.

\begin{theorem} \label{thm:periodic}
Let $p = 1$ or $\infty$. If $f: \R^n \rightarrow \R^n$ is real analytic, $p$-norm nonexpansive, and $\Fix(f)$ is nonempty, then there exists $q \in \N$ such that:
\begin{enumerate}[\normalfont(i)]
\item \label{item:converge} for each $x \in \R^n$, the sequence $(f^{kq}(x))_k$ is convergent;
\item \label{item:periodic} the limit of $(f^{kq}(x))_k$ is a periodic point of $f$ with a minimal period that divides $q$; 
\item \label{item:bound} there exists a permutation $\pi$ on $n$ letters such that $q$ is either the order of $\pi$ or twice the order of $\pi$. 
\end{enumerate}
\end{theorem}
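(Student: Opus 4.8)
The plan is to reduce the study of periodic orbits of $f$ to the linear situation governed by Theorem~\ref{thm:main}, and then invoke the theorem of Lemmens and van Gaans on norm one linear maps referred to in the introduction.

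\emph{Reduction to a finite order linear isometry.} By Proposition~\ref{prop:polyhedral} the minimal periods of periodic points of $f$ take only finitely many values; let $q_0$ be their least common multiple and set $g:=f^{q_0}$, a real analytic $p$-norm nonexpansive map with $\Fix(g)\supseteq\Fix(f)\ne\emptyset$. Every periodic point of $f$ lies in $\Fix(g)$ since its period divides $q_0$, and conversely every orbit of $f$ contained in $\Fix(g)$ has at most $q_0$ points; thus the orbits of $f$ inside $\Fix(g)$ are exactly the periodic orbits of $f$. After translating so that $0\in\Fix(f)$, apply Theorem~\ref{thm:main} and Remark~\ref{rem:Asquared} to $g$: one obtains a subspace $W=W(g)$, a nonexpansive linear projection $A\colon V(g)\to W$, and the nonexpansive retract $R=R_g$ onto $\Fix(g)$, with $A$ and $R$ restricting to mutually inverse isometries between $W$ and $\Fix(g)$. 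Since $f$ commutes with $g$, $f(\Fix(g))\subseteq\Fix(g)$, so we may define $h:=A\circ f|_{\Fix(g)}\circ R\colon W\to W$, which conjugates $f|_{\Fix(g)}$ to a self map of $W$. It is nonexpansive (a composition of isometries with a nonexpansive map), one checks inductively that $h^k=A\circ f^k|_{\Fix(g)}\circ R$, and hence $h^{q_0}=A\circ R=\id_W$ because $f^{q_0}|_{\Fix(g)}=\id$. The chain $\|h(x)-h(y)\|\le\|x-y\|=\|h^{q_0}(x)-h^{q_0}(y)\|\le\|h(x)-h(y)\|$ shows $h$ is an isometry, it is onto because $h\circ h^{q_0-1}=\id_W$, and $h(0)=0$; so by the Mazur--Ulam theorem $h$ is linear. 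Thus $h$ is a finite order linear isometry of the polyhedral normed space $(W,\|\cdot\|_p|_W)$, and its order is exactly $q_0$, since under $A$ the orbits of $h$ correspond to the periodic orbits of $f$, which realize every minimal period.

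\emph{Items (i) and (ii).} These are immediate. For $x\in\R^n$, Proposition~\ref{prop:polyhedral} gives a periodic point $\xi$ of $f$ of minimal period $q$ with $f^{kq}(x)\to\xi$; since $q\mid q_0$ the subsequence $f^{kq_0}(x)$ also converges to $\xi$, proving (i), and $\xi$ is a periodic point of $f$ whose minimal period divides $q_0$, proving (ii). It therefore suffices to prove (iii) with $q=q_0$.

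\emph{Item (iii), and the main obstacle.} It remains to show that $q_0$ — the order of a finite order linear isometry by the first step — is the order or twice the order of a permutation on $n$ letters. A finite order linear map $T$ on $\R^m$ with $\|T\|_p\le1$ has $T$ and $T^{-1}=T^{\operatorname{ord}(T)-1}$ both mapping the unit ball of $(\R^m,\|\cdot\|_p)$ into itself, so $T$ is a linear isometry of $(\R^m,\|\cdot\|_p)$; for $p\in\{1,\infty\}$ such an isometry is a signed permutation matrix, and a short computation shows the order of a signed permutation matrix on $m$ letters equals the order, or twice the order, of a permutation on $m$ letters (this is essentially the result of Lemmens and van Gaans cited in the introduction). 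So it is enough to realize $h$, up to conjugacy, as a finite order norm one linear endomorphism of $(\R^m,\|\cdot\|_p)$ for some $m\le n$; this realization is the heart of the proof, and where the hypotheses $p\in\{1,\infty\}$ and real analyticity are essential. One has a start: for $p\in\{1,\infty\}$ the faces of $B_X$ are obtained by fixing a subset of the coordinates to $\pm1$, so by Lemma~\ref{lem:faces} each $L_E=\spn F_E$ is cut out by equations $x_i=\pm x_j$, hence so is $V(g)=\bigcap_E L_E$; consequently $(V(g),\|\cdot\|_p|_{V(g)})$ is isometric, via a coordinate embedding, to a standard $(\R^m,\|\cdot\|_p)$ with $m\le n$, inside which $W\subseteq V(g)$ and $h$ now live. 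The remaining task is to show that $h$ is the restriction of a genuine $p$-norm nonexpansive linear self map of $V(g)\cong\R^m$ — equivalently, to exclude exotic finite order isometries of polyhedral cross-sections, such as irrational-angle rotations of polygonal central sections of the cube. I expect this to follow by combining the coordinate rigidity above with the real-analytic propagation argument of Lemma~\ref{lem:claim1}, which forces the isometric identities describing $f$ near a periodic orbit to persist off the orbit and should thereby pin down $W$ and $h$. Granting the realization, the result of Lemmens and van Gaans yields that $q_0$ is the order or twice the order of a permutation on $m\le n$, hence on $n$, letters, completing the proof with $q=q_0$.
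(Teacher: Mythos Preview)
Your overall architecture matches the paper's: pass to $g=f^{q_0}$, apply Theorem~\ref{thm:main} to get the isometry $A\colon\Fix(g)\to W$ with inverse $R|_W$, conjugate $f|_{\Fix(g)}$ to a finite order linear isometry $h$ on $W$ via Mazur--Ulam, and then invoke Lemmens--van~Gaans. Parts~(i) and~(ii) are handled correctly.

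The genuine gap is exactly the one you flag yourself: you do not actually carry out the extension of $h$ from $W$ to a $p$-norm nonexpansive linear map on a standard $\ell_p^m$, and the mechanism you propose for it (``real-analytic propagation via Lemma~\ref{lem:claim1}'') is not the right tool. In fact you already have everything needed. By Remark~\ref{rem:Asquared}, $A$ is a \emph{nonexpansive linear projection} of $V(g)$ onto $W$, so $h\circ A$ is a linear nonexpansive self-map of $V(g)$ that agrees with $h$ on $W$ and therefore has the same set of minimal periods. Coupled with your observation that $(V(g),\|\cdot\|_p)$ is linearly isometric to $(\R^m,\|\cdot\|_p)$ for some $m\le n$, this is exactly the realization you were missing, and Lemmens--van~Gaans then gives~(iii). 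The paper takes a slightly different route to the same end: rather than identifying $V(g)\cong\ell_p^m$, it builds an explicit nonexpansive linear projection $P\colon\R^n\to V(g)$ (separately for $p=1$ and $p=\infty$) and applies Lemmens--van~Gaans to the linear nonexpansive map $h\circ A\circ P$ on all of $\R^n$.

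One small correction to your structural claim about $V(g)$: your description ``for $p\in\{1,\infty\}$ the faces of $B_X$ are obtained by fixing a subset of the coordinates to $\pm1$'' is only valid for $p=\infty$. For $p=1$ the faces of $B_X$ are simplices with vertices among $\{\pm e_i\}$, so each $L_E=\spn F_E$ is a coordinate subspace $\{x:x_i=0\ \text{for}\ i\notin I_E\}$, not a subspace cut out by equations $x_i=\pm x_j$. The conclusion you want---that $(V(g),\|\cdot\|_1)$ is isometric to a standard $\ell_1^m$---still holds, since an intersection of coordinate subspaces is a coordinate subspace; only the justification needs adjusting.
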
 

The conclusions of Theorem \ref{thm:periodic} are known to hold for $1 < p < \infty, p \ne 2$ without requiring $f$ to be real analytic. This is a result of Sine \cite[Theorem 3]{Sine90}, see also \cite{LvG09}. Lemmens and van Gaans have shown that the conclusions of Theorem \ref{thm:periodic} are also true for linear nonexpansive maps when $p = 1$ or $\infty$ \cite[Theorem 2.1]{LvG03}.  We will prove Theorem \ref{thm:periodic} by using Theorem \ref{thm:main} to reduce to the linear case where we can apply the result of Lemmens and van Gaans. 

\begin{proof}[Proof of Theorem \ref{thm:periodic}]
For every $x \in \R^n$, there is a periodic point $\xi \in \R^n$ with minimal period $q_x$ such that $\xi = \lim_{k \rightarrow \infty} f^{k q_x} (x)$ by Proposition \ref{prop:polyhedral}.  Moreover, there is an upper bound on $q_x$ which depends only on $n$ and the norm. Let $q$ be the least common multiple of the minimal periods $q_x$ taken over all $x \in \R^n$.  Then assertions \ref{item:converge} and \ref{item:periodic} follow. Furthermore, every periodic point of $f$ is contained in $\Fix(f^q)$. 

We may assume without loss of generality that $f(0) = 0$. Note that compositions of real analytic maps are real analytic (see e.g., \cite{Whittlesey65}), so in particular $f^q$ is real analytic. Then by the proof of Theorem \ref{thm:main}, there is a nonexpansive retract $R$ from $\R^n$ onto $\Fix(f^q)$ and a nonexpansive linear map $A$ on $V(f^q)$ with range $W$ such that $R$ is an isometry from $W$ onto $\Fix(f^q)$ and $A$ is the inverse isometry from $\Fix(f^q)$ onto $W$. Since $f$ is a surjective isometry on $\Fix(f^q)$, it follows that $A \circ f \circ R$ is a surjective isometry on $W$. By the Mazur--Ulam theorem (see e.g., \cite{Nica12}), $A \circ f \circ R$ is a linear map on $W$.  Since the action of $f$ on $\Fix(f^q)$ is isometric to the action of $A \circ f \circ R$ on $W$, it follows that $q$ is the least common multiple of the minimal periods of the periodic points of $A \circ f \circ R$.   

One delicate issue remains.  In order to reduce to the linear case where the desired bound on the period $q$ is known, we need $A \circ f \circ R$ to extend to a nonexpansive linear map on all of $\R^n$.  We will give separate proofs for $p = 1$ and $p = \infty$ that there are nonexpansive linear projections $P$ from $\R^n$ onto $V(f^q)$.  Since $A$ is a nonexpansive linear projection from $V(f^q)$ onto $W$ by Remark \ref{rem:Asquared}, it will follow that
$$A \circ f \circ R \circ A \circ P$$
is a nonexpansive linear map on $\R^n$ which is identical to $A \circ f \circ R$ on $W$.  Conclusion \ref{item:bound} will then follow from \cite[Theorem 2.1]{LvG03}.

($p=1$ case). Let $X$ be $\R^n$ with the 1-norm. Consider any minimal locked set $E$ for $f^q$.  By Lemma \ref{lem:faces}, $L_E$ is the linear span of the face $F_E$ of the unit ball $B_X$. Since $B_X$ is a polytope, the extreme points of $F_E$ are all extreme points of $B_X$ \cite[Theorem 7.3]{Bronsted}. In particular, $L_E$ is the span of the extreme points of $B_X$ that are contained in $F_E$.  Since the extreme points of $B_X$ are the vectors $\pm e_i$, $i \in [n]$, it follows that $L_E$ is the set of vectors with support in a subset $I_E \subseteq [n]$, that is, 
$$L_E = \{x \in \R^n : x_i = 0 \text{ for all } i \notin I_E \}.$$
Since $V(f^q)$ is the intersection of the subspaces $L_E$ over all minimal locked sets $E$ for $f^q$, it follows that $V(f^q) = \{x \in \R^n : x_i = 0 \text{ for all } i \notin I \}$ where $I$ is the intersection of the sets $I_E$ taken over every minimal locked set $E$.  

Let $P$ be the linear projection
$$(P x)_i = \begin{cases} x_i & \text{ if } i \in I, \\ 0 & \text{ otherwise.} \end{cases}$$
It is clear that $P$ is a 1-norm nonexpansive map from $\R^n$ onto $V(f^q)$, which completes the proof when $p = 1$.

($p = \infty$ case). Let $X$ be $\R^n$ with the $\infty$-norm. We can partition $[n]$ into a union of disjoint sets $I_1, I_2, \ldots, I_m$ where each $I_k$ is a maximal set such that $|x_i| = |x_j|$ for all $x \in V(f^q)$ and $i, j \in I_k$.  Note that at most one $I_k$ can have $x_i = 0$ for all $x \in V(f^q)$ and $i \in I_k$.  For every other $I_k$, there exists $x \in V(f^q)$ such that $|x_i| = 1$ for all $i \in I_k$. 

Consider any minimal locked set $E$ for $f^q$.  Since $E$ is an face of $B_{X^*}$, the extreme points of $E$ are extreme points of $B_{X^*}$ \cite[Theorem 7.3]{Bronsted}.  The extreme points of $B_{X^*}$ are $\pm e_i$, $i \in [n]$. Let 
$$\supp(E) = \{ i \in [n] : e_i \in E \text{ or } -e_i \in E \}$$ 
be the \emph{support} of $E$. Note that $|x_i| = |x_j|$ for every $x \in L_E$ and $i, j \in \supp(E)$.  Since $V(f^q) \subseteq L_E$, it follows that $\supp(E)$ must be a subset of one of the maximal sets $I_k$ in our partition, and it is disjoint from all the other sets in the partition.  


For each $I_k$, choose $x_{I_k} \in V(f)$ such that $(x_{I_k})_i = 1$ for some $i \in I_k$, if any such $x_{I_k}$ exists, otherwise let $x_{I_k} = 0$. Construct $v_{I_k} \in \R^n$ such that 
$$(v_{I_k})_i = \begin{cases} (x_{I_k})_i & \text{ if } i \in I_k, \\ 0 & \text{ otherwise.} \end{cases}$$
Observe that $\phi(v_{I_k}) = \phi(x_{I_k})$ for every $\phi \in E$ if $\supp(E) \subseteq I_k$.  In addition, if $\supp(E)$ does not intersect $I_k$, then $\supp(E) \cap I_k = \varnothing$ and $\phi(v_{I_k}) = 0$ for every $\phi \in E$.  In either case, $v_{I_k} \in L_E$ for every minimal locked set $E$ for $f^q$, and therefore $v_{I_k} \in V(f)$.  The set of all nonzero $v_{I_k}$ is a basis for $V(f)$.  

Let 
$$P(x) = \sum_{k = 1}^m \frac{1}{|I_k|} v_{I_k} v_{I_k}^T.$$
Then $P$ is an $\infty$-norm nonexpansive projection from $\R^n$ onto $V(f)$. This completes the proof for $p = \infty$. 
\end{proof}

The maximum order of a permutation on $n$-elements is Landau's function $g(n)$.  It is well known that $g(n) \le e^{n/e}$ which is less than $2^{n-1}$ for all $n \ge 3$.  Therefore Theorem \ref{thm:periodic} confirms that the periods of any periodic orbit of a real analytic $\infty$-norm nonexpansive map on $\R^n$ is strictly less than $2^n$ for all $n \ge 3$.  

\section{Open Questions}

Theorem \ref{thm:main} guarantees that the fixed point set of a real analytic polyhedral norm nonexpansive map is a Lipschitz manifold. In every example that we are aware of, the fixed point set is also a differentiable manifold. So it seems natural to ask the following. 

\begin{question} \label{question:one}
Suppose that $X$ is a polyhedral normed space, $f: X \rightarrow X$ is nonexpansive and real analytic, and $\Fix(f)$ is nonempty. Is $\Fix(f)$ always a differentiable manifold?
\end{question}

The following stronger assertion would resolve the previous question and lead to simpler proofs of both Theorems \ref{thm:main} and Theorem \ref{thm:periodic}.

\begin{question}
Suppose that $X$ is a polyhedral normed space, $f: X \rightarrow X$ is nonexpansive and real analytic, and $\Fix(f)$ is nonempty. Is  there a differentiable nonexpansive retract from $X$ onto $\Fix(f)$? 
\end{question}

Finally we ask whether Theorem \ref{thm:main} is true without assuming that the norm is either polyhedral or strictly convex?  

\bibliography{DW2}
\bibliographystyle{plain}

\end{document}